\newtheorem{theorem}{Theorem}[section]
\newtheorem{lemma}[theorem]{Lemma}
\newtheorem{proposition}[theorem]{Proposition}
\newtheorem{corollary}[theorem]{Corollary}
\theoremstyle{definition}
\newtheorem{definition}[theorem]{Definition}
\newtheorem{remark}[theorem]{Remark}
\begin{document}

\title{Burnside groups and $n$-moves for links}
\author{Haruko A. Miyazawa, Kodai Wada and Akira Yasuhara}

\address{
Institute for Mathematics and Computer Science, Tsuda University,
2-1-1 Tsuda-Machi, Kodaira, Tokyo, 187-8577, Japan}
\email{aida@tsuda.ac.jp}

\address{Faculty of Education and Integrated Arts and Sciences, Waseda University, 1-6-1 Nishi-Waseda, Shinjuku-ku, Tokyo, 169-8050, Japan}
\email{k.wada8@kurenai.waseda.jp}

\address{
Department of Mathematics, Tsuda University, 
2-1-1 Tsuda-Machi, Kodaira, Tokyo, 187-8577, Japan}
\email{yasuhara@tsuda.ac.jp}

\subjclass[2010]{Primary 
57M25, 57M27; Secondary 20F50.}

\keywords{Link; Burnside group; Magnus expansion; Montesinos-Nakanishi $3$-move conjecture; Fox coloring; virtual link; welded link.}

\thanks{This work was supported by JSPS KAKENHI Grant Numbers 
JP17J08186, JP17K05264.}


\begin{abstract}
Let $n$ be a positive integer. 
M. K. D\c{a}bkowski and J. H. Przytycki introduced the $n$th Burnside group of links which is preserved by $n$-moves, 
and proved that for any odd prime $p$ there exist links 
which are not equivalent to trivial links up to $p$-moves 
by using their $p$th Burnside groups. 
This gives counterexamples for the Montesinos-Nakanishi $3$-move conjecture. 
In general, it is hard to distinguish $p$th Burnside groups of a given link and a trivial link. 
We give a necessary condition for which $p$th Burnside groups are isomorphic to those of trivial links. 
The necessary condition gives us an efficient way to distinguish $p$th Burnside groups of a given link and a trivial link. 
As an application, 
we show that there exist links, each of which is not equivalent to a trivial link up to $p$-moves for any odd prime $p$. 
\end{abstract}

\maketitle

\section{Introduction}
Let $n$ be a positive integer. 
An {\em $n$-move} on a link is a local change as illustrated in
Figure~\ref{n-move}. 
Two links are {\em $n$-move equivalent} if they are transformed into each other by a finite sequence of $n$-moves. 
Note that if $n$ is odd then an $n$-move may change the number of components of a link. 
Since a $2$-move is generated by crossing changes and vice versa, 
we can consider an $n$-move as a generalization of a crossing change. 
Any link can be transformed into a trivial link by a finite sequence of crossing changes. 
Therefore, it is natural to ask whether or not any link is $n$-move equivalent to a trivial link. 
In $1980$s, Yasutaka~Nakanishi proved that 
all links with $10$ or less crossings and Montesinos links are $3$-move equivalent to trivial links, 
and he conjectured that
any link is $3$-move equivalent to a trivial link 
(see~\cite[Problem 1.59 (1)]{Kirby}). 
This conjecture is called the {\em Montesinos-Nakanishi $3$-move conjecture}, 
and have been shown to be true for several classes of links, 
for example, all links with $12$ or less crossings, closed $4$-braids and $3$-bridge links~\cite{C,P,PT}.

\begin{figure}[htbp]
  \begin{center}
    \begin{overpic}[width=11cm]{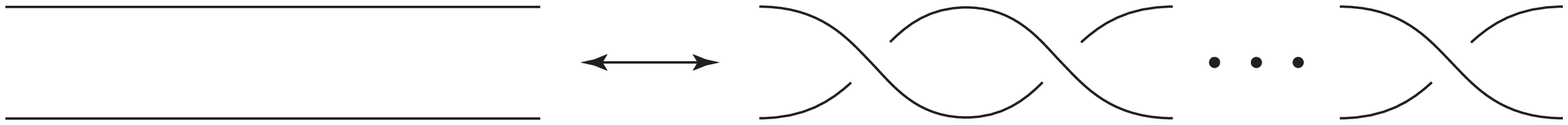}
      \put(172,-2){{\footnotesize $1$}}
      \put(210.5,-2){{\footnotesize $2$}}
      \put(288,-2){{\footnotesize $n$}}
      \put(113,16.5){$n$-move}
    \end{overpic}
  \end{center}
  \caption{}
  \label{n-move}
\end{figure}

After 20 years, in \cite{DP02,DP04} M. K. D\c{a}bkowski and J. H. Przytycki introduced the {\em $n$th Burnside group} of a link 
as an $n$-move equivalent invariant, 
and proved that for any odd prime $p$ there exist links 
which are not $p$-move equivalent to trivial links 
by using their $p$th Burnside groups. 
More precisely, they proved that the closure of the $5$-braid $(\sigma_{1}\sigma_{2}\sigma_{3}\sigma_{4})^{10}$ and the $2$-parallel of the Borromean rings are not $3$-move equivalent to trivial links \cite{DP02}, 
and that the closure of the $4$-braid $(\sigma_{1}\sigma_{2})^{6}$
is not $p$-move equivalent to a trivial link for any prime number $p\geq 5$ \cite{DP04}.
That is, they gave 
counterexamples for the Montesinos-Nakanishi $3$-move conjecture.

It is easy to see that the $p$th Burnside group is preserved by $p$-moves.  
While the $p$th Burnside group is a powerful invariant, 
it is hard to distinguish $p$th Burnside groups of given links in general. 
Hence to find a way to distinguish given Burnside groups is very important. 
In this paper, we give a necessary condition for which $p$th Burnside groups of links are 
isomorphic to those of trivial links (Lemma~\ref{th-FB}). 
The necessary condition gives us an efficient way to distinguish 
$p$th Burnside groups of a given link and a trivial link. 
As a consequence, we have 
a new obstruction to trivializing links by $p$-moves (Theorem~\ref{th-pmove}). 
In fact, by using Theorem~\ref{th-pmove}, 
we show that there exist links, each of which is
not $p$-move equivalent to a trivial link for any odd prime $p$ 
(Theorem~\ref{example}).
Our method is naturally extended to both {\em virtual} and {\em welded} links. 
We prove that there exists a welded link 
which is not $p$-move equivalent to a trivial link 
for any odd prime $p$ (Remark~\ref{ex-w4braid}).

\section{Free Burnside groups}
Throughout this paper, 
for a group $G$ let $\gamma_{q}G$ denote the $q$th term of the lower central series of $G$, 
that is, $\gamma_{1}G=G$ and $\gamma_{q+1}G=[\gamma_{q}G,G]$ $(q=1,2,\ldots)$. 

Let $F_{m}=\langle x_{1},\ldots,x_{m}\rangle$ be the free group of rank $m$.
We set $F(m,n)=F_{m}/\overline{W_{n}}$, 
where $\overline{W_{n}}$ is the normal subgroup of $F_{m}$ generated by $W_{n}=\{w^{n}\mid w\in F_{m}\}$ for a positive integer $n$. 
The group $F(m,n)$ is called the {\em $m$ generator free Burnside group of exponent $n$}.  
Let $F^{q}(m,n)$ denote the quotient group $F(m,n)/\gamma_{q}F(m,n)$. 
We remark that  
$F(m,n)$ is not always finite 
but $F^{q}(m,n)$ is a finite group for all $q$, 
see for example \cite[Chapter $2$]{V}.

\begin{lemma}
\label{th-FB}
Let $G$ be a group with a presentation $\langle x_{1},\ldots,x_{m}~\vline~R,W_{n}\rangle$, where $R$ is a set of words. 
If $G/\gamma_{q}G$ and $F^{q}(m,n)$ are isomorphic, 
then $R\subset \overline{W_{n}}\times\gamma_{q}F_{m}$ for any $q$.
\end{lemma}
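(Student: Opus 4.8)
The plan is to exploit the canonical surjection from the free Burnside group onto $G$ together with the Hopfian property of the finite group $F^{q}(m,n)$. First I would observe that, since $\overline{W_{n}}$ is contained in the normal closure $N$ of $R\cup W_{n}$ in $F_{m}$ and $G=F_{m}/N$, the identity on the generators descends to a surjective homomorphism $\phi\colon F(m,n)\twoheadrightarrow G$. Passing to the quotients by the $q$th terms of the lower central series, $\phi$ induces a surjection $\bar\phi\colon F^{q}(m,n)\twoheadrightarrow G/\gamma_{q}G$; here I use the standard fact that a surjective homomorphism carries $\gamma_{q}$ onto $\gamma_{q}$, so that the induced map is well defined and remains onto.

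Next I would bring in finiteness. The excerpt records that $F^{q}(m,n)$ is a finite group for every $q$. By the hypothesis $G/\gamma_{q}G\cong F^{q}(m,n)$, the target of $\bar\phi$ is finite of the same cardinality as its source. A surjection between finite sets of equal cardinality is a bijection, so $\bar\phi$ is in fact an isomorphism; equivalently, the composite $F^{q}(m,n)\xrightarrow{\bar\phi} G/\gamma_{q}G\xrightarrow{\cong}F^{q}(m,n)$ is a surjective endomorphism of a finite (hence Hopfian) group and is therefore injective. Either way the conclusion is $\ker\bar\phi=1$.

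The final step is to translate $\ker\bar\phi=1$ into a statement about $R$. Writing $G=F(m,n)/\overline{R}'$, where $\overline{R}'$ denotes the normal closure of the image of $R$ in $F(m,n)$, one computes $\gamma_{q}G=(\gamma_{q}F(m,n)\cdot\overline{R}')/\overline{R}'$, so that $G/\gamma_{q}G=F(m,n)/(\gamma_{q}F(m,n)\cdot\overline{R}')$ and $\bar\phi$ is the natural projection with kernel the image of $\overline{R}'$ in $F^{q}(m,n)$. Thus $\ker\bar\phi=1$ is exactly $\overline{R}'\subseteq\gamma_{q}F(m,n)$; in particular the image of each $r\in R$ lies in $\gamma_{q}F(m,n)=(\gamma_{q}F_{m}\cdot\overline{W_{n}})/\overline{W_{n}}$. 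Pulling back along $F_{m}\to F(m,n)=F_{m}/\overline{W_{n}}$ then gives $r\in\overline{W_{n}}\cdot\gamma_{q}F_{m}$ for every $r\in R$, which is the asserted inclusion (the product $\overline{W_{n}}\cdot\gamma_{q}F_{m}$ being a subgroup since both factors are normal in $F_{m}$). The argument is carried out for a fixed $q$, so it yields the conclusion for any $q$ for which the hypothesis holds.

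I expect the genuinely load-bearing idea to be the use of finiteness of $F^{q}(m,n)$ to upgrade the a priori surjection $\bar\phi$ to an isomorphism: without it, an abstract isomorphism $G/\gamma_{q}G\cong F^{q}(m,n)$ would not by itself force the natural map $\bar\phi$ to be injective. The remaining work is routine but slightly delicate bookkeeping, namely identifying $\gamma_{q}(F(m,n)/\overline{R}')$ and $\gamma_{q}(F_{m}/\overline{W_{n}})$ as the images of $\gamma_{q}$ of the ambient free (Burnside) groups and keeping the two normal closures $\overline{R}'$ in $F(m,n)$ and $\overline{R}$ in $F_{m}$ cleanly separated.
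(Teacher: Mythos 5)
Your proposal is correct and follows essentially the same route as the paper: both arguments amount to observing that the natural surjection $F^{q}(m,n)\twoheadrightarrow G/\gamma_{q}G$ must have trivial kernel because $F^{q}(m,n)$ is finite and the hypothesis forces source and target to have equal order, whence each $r\in R$ maps to $1$ and so lies in $\overline{W_{n}}\cdot\gamma_{q}F_{m}$. Your write-up is somewhat more explicit about the bookkeeping (identifying $\gamma_{q}$ of quotients with images of $\gamma_{q}$, and invoking Hopficity), but the load-bearing idea is identical to the paper's.
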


\begin{proof}
First we note that 
\[F^{q}(m,n)\cong\langle x_{1},\ldots,x_{m}\mid W_{n}, \gamma_{q}F_{m}\rangle\]
and
\[G/\gamma_{q}G\cong\langle x_{1},\ldots,x_{m}\mid R,W_{n},\gamma_{q}F_{m}\rangle.\]
Consider a sequence of two natural projections $\psi$ and $\phi$:  
\[
F_{m}\overset{\psi}{\longrightarrow}
\langle x_{1},\ldots,x_{m}\mid W_{n}, \gamma_{q}F_{m}\rangle
\overset{\phi}{\longrightarrow}
\langle x_{1},\ldots,x_{m}\mid R, W_{n}, \gamma_{q}F_{m}\rangle.
\]
Then for any $r\in R$, $\phi(\psi(r))$ vanishes because $\psi(r)=r\overline{W_{n}}\times\gamma_{q}F_{m}$. 
So $\psi(r)\in \ker{\phi}$. 
On the other hand, we have that 
\[
| F^{q}(m,n)|=|\langle x_{1},\ldots,x_{m}\mid W_{n}, \gamma_{q}F_{m}\rangle|
=|G/\gamma_{q}G|\times|\ker{\phi} |.
\]
Since $F^{q}(m,n)$ is finite, $|\ker{\phi} |=1$. 
Therefore we have that $r\in \overline{W_{n}}\times\gamma_{q}F_{m}$. 
\end{proof}

\section{Burnside groups of links}
Let $L$ be a link in the $3$-sphere $S^{3}$ and $D$ an unoriented diagram of $L$. 
In~\cite{FR,J,Kelly,W}, a group $\Pi_{D}^{(2)}$ of $D$ is defined 
as follows:  
Each arc of $D$ yields a generator, and each crossing of $D$ gives a relation $yx^{-1}yz^{-1}$, where $x$ and $z$ correspond to the underpasses and $y$ corresponds to the overpass at the crossing, 
see Figure~\ref{corerelation}. 
The group $\Pi_{D}^{(2)}$ is an invariant of $L$.
We call it the {\em associated core group} of $L$ and denote it by~$\Pi_{L}^{(2)}$.

\begin{figure}[htbp]
  \begin{center}
    \begin{overpic}[width=1.5cm]{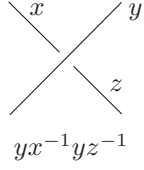}
      \put(8,38){$x$}
      \put(45,38){$y$}
      \put(38,9){$z$}
      \put(2,-15){$yx^{-1}yz^{-1}$}
    \end{overpic}
  \end{center}
  \vspace{1em}
  \caption{Relation of the associated core group}
  \label{corerelation}
\end{figure}

\begin{remark}
M. Wada~\cite{W} proved that $\Pi_{L}^{(2)}$ is isomorphic to the free product of the fundamental group of the double branched cover $M_{L}^{(2)}$ of $S^{3}$ branched along $L$ and the infinite cyclic group $\mathbb{Z}$.
That is, $\Pi_{L}^{(2)}\cong\pi_{1}(M_{L}^{(2)})*\mathbb{Z}$. 
Moreover, D\c{a}bkowski and Przytycki~\cite{DP02,DP04} pointed out that for a diagram $D$ of $L$,  
$\pi_{1}(M_{L}^{(2)})$ is obtained from 
the group $\Pi_{D}^{(2)}$ of $D$ by putting any fixed generator $x=1$. 
\end{remark}

In~\cite{DP02,DP04}, D\c{a}bkowski and Przytycki 
introduced $n$-move equivalence invariants of $L$ by using $\Pi_{L}^{(2)}$ and $\pi_{1}(M_{L}^{(2)})$ 
as follows.

\begin{definition}[\cite{DP02,DP04}] 
Suppose that $\Pi_{D}^{(2)}=\langle x_{1},\ldots,x_{m}\mid R\rangle$. 
Then $\pi_{1}(M_{L}^{(2)})\cong\langle x_{1},\ldots,x_{m}\mid R,x_{m}\rangle$. 
The {\em unreduced $n$th Burnside group $\widehat{B}_{L}(n)$} of $L$ is defined as $\langle x_{1},\ldots,x_{m}\mid R,W_{n}\rangle$. 
The {\em $n$th Burnside group ${B}_{L}(n)$} of $L$ is 
defined as $\langle x_{1},\ldots,x_{m}\mid R,x_{m},W_{n}\rangle$.
\end{definition}

\begin{proposition}[\cite{DP02,DP04}]
$\widehat{B}_{L}(n)$ and ${B}_{L}(n)$ are preserved by $n$-moves. 
\end{proposition}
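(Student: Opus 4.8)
The plan is to reduce to a single $n$-move and then carry out a local computation in the presentation of the associated core group, the only delicate point being the bookkeeping inside the twist region.

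First I would note that $n$-move equivalence is generated by single $n$-moves and that group isomorphism is transitive, so it suffices to treat two diagrams $D$ and $D'$ that coincide outside a disk $\Delta$, where $D\cap\Delta$ consists of two parallel strands and $D'\cap\Delta$ consists of the $n$ crossings of Figure~\ref{n-move}. Because $\Pi^{(2)}$ is a link invariant, I may work with the presentation read off from each diagram, so that the generators of $\widehat{B}_{D'}(n)$ are the arcs of $D'$ and its relations are the crossing relations $yx^{-1}yz^{-1}$ together with $W_{n}$, and likewise for $\widehat{B}_{D}(n)$. Outside $\Delta$ the two diagrams, hence their arcs and relations, are identical, so all the work is concentrated at the four boundary points of $\Delta$.

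Next I would label the two arcs entering $\Delta$ from below by $g$ and $h$ and eliminate, by Tietze transformations, the arcs created inside the twist region. Each crossing relation $z=yx^{-1}y$ expresses one newly created arc in terms of arcs already encountered, so the internal arcs can be removed one at a time from the bottom up without obstruction. Writing $w=gh^{-1}$, an induction on the number $k$ of processed crossings shows that the two arcs leaving the $k$th crossing carry the labels $w^{k}g$ and $w^{k-1}g$; in particular the two arcs leaving $\Delta$ at the top of $D'$ carry the labels $(gh^{-1})^{n}g$ and $(gh^{-1})^{n-1}g$.

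Finally I would invoke the exponent-$n$ relations. Since $(gh^{-1})^{n}\in W_{n}$, modulo $\overline{W_{n}}$ one has $(gh^{-1})^{n}g\equiv g$ and $(gh^{-1})^{n-1}g\equiv(gh^{-1})^{-1}g=h$. Thus the labels at the four boundary points of $\Delta$ in $D'$ reduce, modulo $W_{n}$, to $g,h$ at the bottom and $g,h$ at the top, which are exactly the labels carried by the two through-strands of $D$. After the elimination and the reduction by $W_{n}$, the presentation of $\widehat{B}_{D'}(n)$ therefore becomes word-for-word that of $\widehat{B}_{D}(n)$, giving $\widehat{B}_{D}(n)\cong\widehat{B}_{D'}(n)$; imposing in addition the relation $x_{m}=1$ for a generator $x_{m}$ chosen outside $\Delta$ yields $B_{D}(n)\cong B_{D'}(n)$. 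I expect the main obstacle to be not the algebra, whose heart is the single identity $(gh^{-1})^{n}\equiv 1$, but the geometric bookkeeping: fixing the orientation of the crossing relation, tracking which arc leaves at which boundary point as the strands swap (so that the parity of $n$, and the possible change in the number of components, is accommodated), and checking that the external relations see matching labels at all four boundary points.
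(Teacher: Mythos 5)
The paper gives no proof of this proposition---it is quoted directly from D\c{a}bkowski--Przytycki \cite{DP02,DP04}---but your argument is correct and is essentially the standard one from those references. The twist-region induction giving exit labels $(gh^{-1})^{k}g$ and $(gh^{-1})^{k-1}g$, followed by killing $(gh^{-1})^{n}$ in the exponent-$n$ quotient so that the boundary labels revert to $g$ and $h$ regardless of the parity of $n$, is exactly why $\widehat{B}_{L}(n)$ and $B_{L}(n)$ survive an $n$-move.
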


We will focus on the unreduced $n$th Burnside group $\widehat{B}_{L}(n)$ from now on. 
Let $\widehat{B}^{q}_{L}(n)$ denote the quotient group $\widehat{B}_{L}(n)/\gamma_{q}\widehat{B}_{L}(n)$, 
which is a finite group for all $q$.  
Then the proposition above immediately implies the following corollary.

\begin{corollary}
\label{cor-inv}
$\widehat{B}^{q}_{L}(n)$ and $|\widehat{B}^{q}_{L}(n)|$ are preserved by $n$-moves for any $q$.
\end{corollary}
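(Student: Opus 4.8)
The plan is to deduce everything from the preceding Proposition together with the naturality of the lower central series. The Proposition tells us that $\widehat{B}_{L}(n)$ is an $n$-move equivalence invariant: if a link $L'$ is obtained from $L$ by a single $n$-move, then there is an isomorphism $\varphi\colon\widehat{B}_{L}(n)\to\widehat{B}_{L'}(n)$. Since $n$-move equivalence is generated by single $n$-moves, it suffices to treat this case. The only additional input I need is the elementary fact that every term $\gamma_{q}G$ of the lower central series is a characteristic subgroup; more precisely, any group isomorphism carries the lower central series of its source onto that of its target.

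First I would record this functoriality explicitly by induction on $q$. For $q=1$ the claim $\varphi(\gamma_{1}\widehat{B}_{L}(n))=\gamma_{1}\widehat{B}_{L'}(n)$ is just surjectivity of $\varphi$. For the inductive step, using that an isomorphism sends commutators to commutators we have $\varphi([\gamma_{q}\widehat{B}_{L}(n),\widehat{B}_{L}(n)])=[\varphi(\gamma_{q}\widehat{B}_{L}(n)),\varphi(\widehat{B}_{L}(n))]$, so the inductive hypothesis $\varphi(\gamma_{q}\widehat{B}_{L}(n))=\gamma_{q}\widehat{B}_{L'}(n)$ yields $\varphi(\gamma_{q+1}\widehat{B}_{L}(n))=\gamma_{q+1}\widehat{B}_{L'}(n)$. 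Consequently $\varphi$ descends to a well-defined isomorphism between the quotients $\widehat{B}_{L}(n)/\gamma_{q}\widehat{B}_{L}(n)$ and $\widehat{B}_{L'}(n)/\gamma_{q}\widehat{B}_{L'}(n)$; that is, $\widehat{B}^{q}_{L}(n)\cong\widehat{B}^{q}_{L'}(n)$ for every $q$.

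Finally, since each $\widehat{B}^{q}_{L}(n)$ is a finite group, the isomorphism just obtained immediately gives the equality of orders $|\widehat{B}^{q}_{L}(n)|=|\widehat{B}^{q}_{L'}(n)|$. I do not expect any genuine obstacle here: the entire content is the functoriality of $\gamma_{q}$ under isomorphisms, and the remainder is the routine bookkeeping of passing from an isomorphism of groups to an isomorphism of their lower central quotients. This is precisely why the statement is phrased as an immediate corollary of the Proposition.
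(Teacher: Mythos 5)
Your argument is correct and is exactly the reasoning the paper leaves implicit when it says the Proposition ``immediately implies'' the Corollary: an isomorphism $\widehat{B}_{L}(n)\cong\widehat{B}_{L'}(n)$ carries the lower central series to the lower central series, hence descends to the quotients, and finiteness of $\widehat{B}^{q}_{L}(n)$ gives the equality of orders. Nothing is missing.
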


\begin{remark}
\label{rem-coloring}
Let $\mathbb{Z}_{n}$ denote the cyclic group $\mathbb{Z}/n\mathbb{Z}$ of order $n$.
Let $L$ be a link and $D$ a diagram of $L$. 
A map $f:\{\text{arcs of $D$}\}\rightarrow\mathbb{Z}_{n}$ is 
a {\em Fox $n$-coloring} of $D$ if $f$ satisfies $f(x)+f(z)=2f(y)$ for each crossing of $D$, 
where $x$ and $z$ correspond to the underpasses and $y$ corresponds to the overpass at the crossing. 
The set of Fox $n$-colorings of $D$ forms an abelian group and is an invariant of $L$. 
It is known that the order of the abelian group 
is equal
to $|\widehat{B}^{2}_{L}(n)|$ (see~\cite[Proposition~4.5]{P16}). 
Moreover, if $L$ is the $m$-component trivial link, then 
$\widehat{B}^{2}_{L}(n)\cong\mathbb{Z}_n^m$.
\end{remark}

\section{Obstruction to trivializing links by $p$-moves}

Let $p$ be a prime number. 
The {\em Magnus $\mathbb{Z}_{p}$-expansion $E^{p}$} is a homomorphism 
from $F_{m}$ into the formal power series ring in non-commutative variables $X_{1},\ldots,X_{m}$ with $\mathbb{Z}_{p}$ coefficients defined by 
$E^{p}(x_{i})=1+X_{i}$ and $E^{p}(x_{i}^{-1})=1-X_{i}+X_{i}^{2}-X_{i}^{3}+\cdots$ $(i=1,\ldots,m)$.  
Then we have the following theorem.

\begin{theorem}
\label{th-pmove}
Let $L$ be a link with $\Pi_{L}^{(2)}\cong\langle x_{1},\ldots,x_{m}\mid R\rangle$ and $\widehat{B}^{2}_{L}(p)\cong\mathbb{Z}^{m}_{p}$. 
If $L$ is $p$-move equivalent to a trivial link,  
then for any $r\in R$,
\[
E^{p}(r)=1+\displaystyle\sum_{(i_{1},\ldots,i_{p})}c(i_{1},\ldots,i_{p})X_{i_{1}}\cdots X_{i_{p}}+d(p+1)
\] 
for some $c(i_{1},\ldots,i_{p})\in\mathbb{Z}_{p}$ such that 
$c(i_{1},\ldots,i_{p})=c(i_{\sigma(1)},\ldots,i_{\sigma(p)})$ 
for any  permutation $\sigma$ of $\{1,\ldots,p\}$, 
where $(i_{1},\ldots,i_{p})$ runs over $\{1,\ldots,m\}^{p}$ 
and $d(k)$ denotes the terms of degree $\geq k$. 
\end{theorem}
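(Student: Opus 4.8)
The plan is to combine the $p$-move invariance of the Burnside groups with Lemma~\ref{th-FB}, and then to read off the asserted normal form from the Magnus $\mathbb{Z}_p$-expansion. First I would identify the target of the $p$-moves. If $L$ is $p$-move equivalent to an $m'$-component trivial link $T$, then $T$ has a crossingless diagram, so $\Pi^{(2)}_T$ is free of rank $m'$ and $\widehat{B}_T(p)=F(m',p)$, whence $\widehat{B}^q_T(p)=F^q(m',p)$ for every $q$. By Corollary~\ref{cor-inv} each $\widehat B^q_L(p)$ is a $p$-move invariant, so $\widehat B^q_L(p)\cong F^q(m',p)$ for all $q$. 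Comparing the case $q=2$ with the hypothesis $\widehat B^2_L(p)\cong\mathbb Z_p^m$ via Remark~\ref{rem-coloring} forces $m'=m$, and hence $\widehat B^q_L(p)\cong F^q(m,p)$ for all $q$.

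Next I would apply Lemma~\ref{th-FB} to $G=\widehat B_L(p)=\langle x_1,\dots,x_m\mid R,W_p\rangle$ with $n=p$. The isomorphisms $G/\gamma_q G\cong F^q(m,p)$ just obtained give $R\subset\overline{W_p}\cdot\gamma_q F_m$ for every $q$. Specializing to $q=p+1$, I can write each $r\in R$ as $r=w\,g$ with $w\in\overline{W_p}$ and $g\in\gamma_{p+1}F_m$.

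The remaining and most delicate step is the Magnus computation. Two facts drive it: $E^p$ sends $\gamma_{p+1}F_m$ into $1+d(p+1)$, and for a single $p$-th power $E^p(v^p)=(1+V)^p=\sum_{k=0}^p\binom{p}{k}V^k$ with $V=E^p(v)-1$. The key observation is that, since all $p$ factors coincide, this binomial expansion is valid verbatim in the noncommutative ring, and as $p$ is prime $\binom{p}{k}\equiv0\pmod p$ for $0<k<p$; hence $E^p(v^p)\equiv1+V^p\pmod p$. Writing $V=V_1+(\text{higher degree})$ with linear part $V_1=\sum_i a_iX_i$, the degree-$p$ part of $V^p$ is $V_1^p=\sum_{(i_1,\dots,i_p)}a_{i_1}\cdots a_{i_p}X_{i_1}\cdots X_{i_p}$, whose coefficients $a_{i_1}\cdots a_{i_p}$ are products of scalars and therefore automatically invariant under permutations. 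Thus $E^p(v^p)=1+(\text{symmetric degree }p)+d(p+1)$. I would then verify this normal form is stable under conjugation (conjugating by $E^p(h)=1+(\text{terms of degree}\ge1)$ adds only terms of degree $>p$ to a degree-$p$ block) and under products and inverses (the symmetric degree-$p$ parts simply add), so the same shape holds for every $w\in\overline{W_p}$. Finally, $E^p(r)=E^p(w)E^p(g)$ and multiplying by $E^p(g)=1+d(p+1)$ leaves the degree-$p$ block unchanged, yielding exactly the claimed expansion.

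The main obstacle I anticipate is making the noncommutative ``freshman's dream'' rigorous: namely that $(1+V)^p=\sum_k\binom{p}{k}V^k$ holds without commutativity precisely because the base is a single element, and then controlling the passage from $V^p$ to its degree-$p$ part $V_1^p$ together with the conjugation/product stability, so that no asymmetric degree-$p$ contribution leaks in from the tails $d(p+1)$ of neighboring factors.
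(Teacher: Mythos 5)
Your proposal is correct and follows essentially the same route as the paper: reduce to $\widehat{B}^q_L(p)\cong F^q(m,p)$ via invariance and Remark~\ref{rem-coloring}, apply Lemma~\ref{th-FB} with $q\geq p+1$, and extract the symmetric degree-$p$ block from $E^p$ of a product of $p$-th powers using $(1+V)^p\equiv 1+V^p\pmod p$. The only cosmetic difference is that the paper writes each element of $\overline{W_p}$ directly as a plain product $\prod_j u_j^p$ by citing that $\overline{W_p}$ is a verbal subgroup, whereas you verify stability of the normal form under conjugation explicitly; both dispositions of that point are fine.
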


\begin{proof}
If $L$ is $p$-move equivalent to a trivial link $T$, 
then $\widehat{B}^{2}_{T}(p)\cong\widehat{B}_{L}^{2}(p)\cong\mathbb{Z}^{m}_{p}$. 
By Remark~\ref{rem-coloring} the number of components of $T$ is $m$, and hence $\widehat{B}_{L}^{q}(p)\cong F^{q}(m,p)$ 
for any positive integer $q$. 
Thus we have that 
$R\subset \overline{W_{p}}\times\gamma_{q}F_{m}$ by Lemma~\ref{th-FB}.   
In~particular, for any $r\in R$, 
we have that $r=(\prod_{j}u_{j}^{p})v$ for some $\prod_{j}u_{j}^{p}\in \overline{W_{p}}$ and $v\in\gamma_{q}F_{m}$ 
because $\overline{W_{p}}$ is a verbal subgroup.

Now we may assume that $q\geq p+1$. 
Hence we have that $E^{p}(v)=1+d(p+1)$. 
For each $j$, $E^{p}(u_{j})$ can be written in the form 
$1+\sum_{i=1}^{m}a_{ji}X_{i}+d(2)$ 
for some $a_{ji}\in\mathbb{Z}_{p}$. 
Then we have that 
\[
E^{p}(u^{p}_{j})=1+\left(\displaystyle\sum_{i=1}^{m}a_{ji}X_{i}+d(2)\right)^{p}
=1+\displaystyle\sum_{(i_{1},\ldots,i_{p})}a_{ji_{1}}\cdots a_{ji_{p}}
X_{i_{1}}\cdots X_{i_{p}}+d(p+1).
\] 
Thus we have that 
\[\begin{array}{lll}
E^{p}\left(\displaystyle\prod_{j}u_{j}^{p}\right)
&=&\displaystyle\prod_{j}\left(1+\sum_{(i_{1},\ldots,i_{p})} 
a_{ji_{1}}\cdots a_{ji_{p}}X_{i_{1}}\cdots X_{i_{p}}+d(p+1)\right) 
\vspace{0.5em} \\ 
&=&1+\displaystyle\sum_{(i_{1},\ldots,i_{p})}c(i_{1},\ldots, i_{p})X_{i_{1}}\cdots X_{i_{p}}+d(p+1) 
\end{array}
\]
for some $c(i_{1},\ldots,i_{p})\in\mathbb{Z}_{p}$ such that 
$c(i_{1},\ldots,i_{p})=c(i_{\sigma(1)},\ldots,i_{\sigma(p)})$ 
for any permutation $\sigma$ of $\{1,\ldots,p\}$.  
Therefore $E^{p}(r)$ is the desired form.  
\end{proof}

Even though $4$ is not prime, 
we can show
the following theorem by a similar way to the proof of Theorem~\ref{th-pmove}.
We note that $4$-moves preserve the number of components of a link.

\begin{theorem}
\label{th-4move}
Let $L$ be an $m$-component link with $\Pi_{L}^{(2)}\cong\langle x_{1},\ldots,x_{m}\mid R\rangle$. 
If $L$ is $4$-move equivalent to a trivial link, 
then for any $r\in R$, 
\[
E^{2}(r)=1+\displaystyle\sum_{(i_{1},i_{2},i_{3},i_{4})}c(i_{1},i_{2},i_{3},i_{4})X_{i_{1}}X_{i_{2}}X_{i_{3}}X_{i_{4}}+d(5)
\] 
for some $c(i_{1},i_{2},i_{3},i_{4})\in\mathbb{Z}_{2}$ such that 
$c(i_{1},i_{2},i_{3},i_{4})=c(i_{\sigma(1)},i_{\sigma(2)},i_{\sigma(3)},i_{\sigma(4)})$ for any permutation $\sigma$ of $\{1,2,3,4\}$, 
where $(i_{1},i_{2},i_{3},i_{4})$ runs over $\{1,\ldots,m\}^{4}$.
\end{theorem}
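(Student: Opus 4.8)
The plan is to mirror the proof of Theorem~\ref{th-pmove} almost verbatim, replacing the prime $p$ by the exponent $4$ and the Magnus $\mathbb{Z}_p$-expansion by the Magnus $\mathbb{Z}_2$-expansion $E^2$. The first step is to invoke Lemma~\ref{th-FB}. Suppose $L$ is $4$-move equivalent to a trivial link $T$. Since $4$-moves preserve the number of components, $T$ must be the $m$-component trivial link. Hence by Corollary~\ref{cor-inv} (applied with $n=4$ and all $q$), together with the identification $\widehat{B}^q_T(4)\cong F^q(m,4)$ for the $m$-component trivial link, we get $\widehat{B}^q_L(4)\cong F^q(m,4)$ for every $q$. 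Lemma~\ref{th-FB} then yields $R\subset \overline{W_4}\times\gamma_q F_m$ for every $q$, so each $r\in R$ can be written as $r=(\prod_j u_j^{4})v$ with $\prod_j u_j^4\in\overline{W_4}$ and $v\in\gamma_q F_m$, using that $\overline{W_4}$ is a verbal subgroup.

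Next I would carry out the expansion computation, now choosing $q\geq 5$ so that $E^2(v)=1+d(5)$. Writing $E^2(u_j)=1+\sum_{i=1}^m a_{ji}X_i+d(2)$ with $a_{ji}\in\mathbb{Z}_2$, the key local computation is that
\[
E^2(u_j^{4})=1+\Bigl(\sum_{i=1}^m a_{ji}X_i+d(2)\Bigr)^{4}
=1+\sum_{(i_1,i_2,i_3,i_4)}a_{ji_1}a_{ji_2}a_{ji_3}a_{ji_4}X_{i_1}X_{i_2}X_{i_3}X_{i_4}+d(5),
\]
the point being that raising to the fourth power kills all terms of degree below $4$ and pushes everything of degree above $4$ into $d(5)$. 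Multiplying these factors together over $j$ and collecting, one obtains
\[
E^2\Bigl(\prod_j u_j^{4}\Bigr)=1+\sum_{(i_1,i_2,i_3,i_4)}c(i_1,i_2,i_3,i_4)X_{i_1}X_{i_2}X_{i_3}X_{i_4}+d(5),
\]
and since each $a_{ji_1}a_{ji_2}a_{ji_3}a_{ji_4}$ is symmetric in its four indices, so is each coefficient $c(i_1,i_2,i_3,i_4)$. Combining with $E^2(v)=1+d(5)$ gives the asserted form for $E^2(r)$.

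The main obstacle, and the only place where the argument genuinely differs from the prime case, is the symmetry claim on the degree-$4$ coefficients. In Theorem~\ref{th-pmove} the cross terms coming from multiplying distinct factors $E^2(u_j^4)$ and $E^2(u_{j'}^4)$ automatically live in degree $\geq 2p$ and hence in $d(p+1)$; here I must check that all such mixed products indeed have degree $\geq 8>4$ and so fall into $d(5)$, leaving only the diagonal contributions $a_{ji_1}a_{ji_2}a_{ji_3}a_{ji_4}$, which are manifestly symmetric. The verification is routine degree-counting, identical in spirit to the prime-$p$ case with $p=4$ playing the role of the exponent; no new idea beyond observing that $4$-moves fix the component count (so that $T$ has exactly $m$ components and $\widehat{B}^2_L(4)\cong\mathbb{Z}_2^m$ holds without a separate hypothesis) is required.
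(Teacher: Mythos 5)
Your proof is correct and is exactly the argument the paper intends: the paper gives no separate proof of Theorem~\ref{th-4move}, saying only that it follows ``by a similar way'' from Theorem~\ref{th-pmove} together with the fact that $4$-moves preserve the number of components, which you correctly use in place of the hypothesis $\widehat{B}^{2}_{L}(p)\cong\mathbb{Z}_p^m$ to pin down the $m$-component trivial link and hence $\widehat{B}^{q}_{L}(4)\cong F^{q}(m,4)$ before invoking Lemma~\ref{th-FB}. The one point to state more carefully is the identity $E^{2}(u_j^{4})=1+\bigl(\sum_{i}a_{ji}X_i+d(2)\bigr)^{4}$: writing $E^{2}(u_j)=1+A$, the intermediate terms $4A$, $6A^{2}$, $4A^{3}$ of $(1+A)^{4}$ do not vanish for degree reasons as your parenthetical suggests, but because $\binom{4}{1}$, $\binom{4}{2}$, $\binom{4}{3}$ are all even and hence zero in $\mathbb{Z}_2$ --- this is precisely where the non-primality of $4$ could bite and where the choice of coefficient ring $\mathbb{Z}_2$ rescues the computation (also, for the $m$-component trivial link one has $\widehat{B}^{2}_{L}(4)\cong\mathbb{Z}_4^m$ rather than $\mathbb{Z}_2^m$, though you never actually need that isomorphism).
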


By applying Theorem~\ref{th-pmove}, we have the following theorem.

\begin{theorem}\label{example}
The closure of the $5$-braid 
$(\sigma_{1}\sigma_{2}\sigma_{3}\sigma_{4})^{10}$ and the $2$-parallel of the Borromean rings 
are not $p$-move equivalent to trivial links for any odd prime $p$. 
\end{theorem}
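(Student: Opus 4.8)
The plan is to apply Theorem~\ref{th-pmove} in its contrapositive form to each of the two links separately. Fix an odd prime $p$. It suffices to produce, for each link $L$, a presentation $\Pi_L^{(2)}\cong\langle x_1,\ldots,x_m\mid R\rangle$ satisfying the coloring hypothesis $\widehat B^2_L(p)\cong\mathbb Z_p^m$, together with a single relator $r\in R$ whose Magnus $\mathbb Z_p$-expansion $E^p(r)$ fails to have the form prescribed by Theorem~\ref{th-pmove}: either $E^p(r)$ contains a nonzero homogeneous term of some degree $d$ with $2\le d\le p-1$, or its degree-$p$ term is not symmetric. Any such $r$ contradicts the conclusion of Theorem~\ref{th-pmove}, and hence shows that $L$ is not $p$-move equivalent to a trivial link.

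First I would record the core-group presentations. For $L$ the closure of $(\sigma_1\sigma_2\sigma_3\sigma_4)^{10}$ I would use the five top strands as generators $x_1,\ldots,x_5$, push them through the braid using the core-group crossing rule $z=yx^{-1}y$, and read off the five closure relators $r_i=W_i x_i^{-1}$, where $W_i$ is the word carried to the bottom by the $i$th strand; for the $2$-parallel of the Borromean rings I would use a standard diagram and an analogous reduction to a presentation on $m=6$ generators. In both cases I would verify $\widehat B^2_L(p)\cong\mathbb Z_p^m$ using Remark~\ref{rem-coloring}: for the braid closure this reduces to checking that the Burau matrix at $t=-1$ of $(\sigma_1\sigma_2\sigma_3\sigma_4)^{10}$ is the identity over $\mathbb Z_p$ --- equivalently, that the group of Fox $p$-colorings has order $p^5$ --- which holds because $(\sigma_1\sigma_2\sigma_3\sigma_4)^5$ is the full twist and acts as $-I$ on the reduced Burau representation at $t=-1$, so that its square acts as $I$; for the Borromean $2$-parallel I would instead count Fox $p$-colorings directly and check that the order is $p^6$.

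The substance is the Magnus computation. Because $\widehat B^2_L(p)\cong\mathbb Z_p^m$, every relator is trivial in the abelianization modulo $p$, so the degree-$1$ part of each $E^p(r)$ already vanishes and the first obstruction lies in higher degree. I expect to locate, for each link, a relator $r$ and a monomial $X_{i_1}\cdots X_{i_d}$ of small degree $d$ --- for the Borromean $2$-parallel this should be $d=3$, reflecting the nonzero triple linking of the Borromean rings --- whose coefficient in $E^p(r)$ is nonzero and \emph{not} invariant under permuting the indices. Such a term of degree $d$ settles the odd primes at once: for $p>d$ it is forbidden because every homogeneous part of degrees $2,\ldots,p-1$ must vanish, and for $p=d$ (when $d$ is prime) it is forbidden because the degree-$p$ part must be symmetric. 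When $d=3$ this already covers every odd prime; any remaining odd prime $p<d$ --- at worst $p=3$ --- can be handled by the explicit Burnside-group computations of D\c{a}bkowski and Przytycki~\cite{DP02}.

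The hard part will be the explicit Magnus expansion and, above all, its uniformity in $p$. The coefficients of $E^p(r)$ are integers determined by the core-group relators, and I must guarantee that the chosen obstructing coefficient is nonzero modulo \emph{every} odd prime and non-symmetric modulo the smallest relevant prime; this is immediate when the coefficient equals $\pm 1$, but if it is a larger integer then the primes dividing it must be examined separately, possibly by passing to a higher-degree term of the same relator. Arranging the computation so that the obstruction is visibly independent of $p$ --- ideally isolating a degree-$3$ term with a $\pm 1$ coefficient whose non-symmetry is manifest --- is the crux of the argument.
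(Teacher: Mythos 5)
Your proposal follows essentially the same route as the paper: the authors compute core-group presentations of both links from braid diagrams, note $\widehat B^2_L(p)\cong\mathbb Z_p^m$, and then exhibit in a single relator a degree-$3$ monomial pair with coefficients $0$ and $2$ (for the braid closure) and $0$ and $1$ (for the Borromean $2$-parallel), which violates the vanishing condition of Theorem~\ref{th-pmove} for $p\ge 5$ and the symmetry condition for $p=3$ --- exactly the dichotomy you describe, with the coefficients small enough that no auxiliary prime analysis or appeal to \cite{DP02} is needed.
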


\begin{remark}
D\c{a}bkowski and Przytycki proved Theorem~\ref{example} for $p=3$  \cite[Theorem~6]{DP02}. 
In their proof, the condition that $p=3$ is essential, and hence  
it seems hard to show Theorem~\ref{example} by using their arguments. 
\end{remark}

\begin{proof}[Proof of Theorem~\ref{example}]
Let $\gamma$ be the $5$-braid 
$(\sigma_{1}\sigma_{2}\sigma_{3}\sigma_{4})^{10}$ described by a diagram in Figure~\ref{5braid}. 
We put labels $x_{i}$ $(i=1,2,3,4,5)$ on initial arcs of the diagram. 
Progress from left to right, 
then the arcs are labeled by using relations of the associated core group.  
Thus we obtain labels $Q_{i}$ of terminal arcs of $\gamma$ as follows (see~\cite[Lemma 5]{DP02}):
\[
Q_{i}=x_{1}x_{2}^{-1}x_{3}x_{4}^{-1}x_{5}
x_{1}^{-1}x_{2}x_{3}^{-1}x_{4}x_{5}^{-1}x_{i}
x_{5}^{-1}x_{4}x_{3}^{-1}x_{2}x_{1}^{-1}
x_{5}x_{4}^{-1}x_{3}x_{2}^{-1}x_{1}. 
\] 
Let $\overline{\gamma}$ be the closure of $\gamma$. 
Since we have relations $Q_{i}x_{i}^{-1}$ for $\Pi^{(2)}_{\overline{\gamma}}$,
$\Pi_{\overline{\gamma}}^{(2)}$ has the presentation
$\langle x_{1},x_{2},x_{3},x_{4},x_{5}
\mid r_{1},r_{2},r_{3},r_{4},r_{5}\rangle$, 
where $r_{i}=Q_{i}x_{i}^{-1}$. 
We note that $\widehat{B}^{2}_{\overline{\gamma}}(p)\cong\mathbb{Z}_{p}^{5}$ for any odd prime $p$. 
On the other hand, by computing $E^{p}(r_{1})$, then 
the coefficient of $X_{2}X_{3}X_{4}$ is $0$
and that of $X_{4}X_{2}X_{3}$ is $2$ in $E^{p}(r_{1})$. 
Theorem~\ref{th-pmove} implies that $\overline{\gamma}$ is not $p$-move equivalent to 
a trivial link.

\begin{figure}[htbp]
  \begin{center}
    \begin{overpic}[width=11cm]{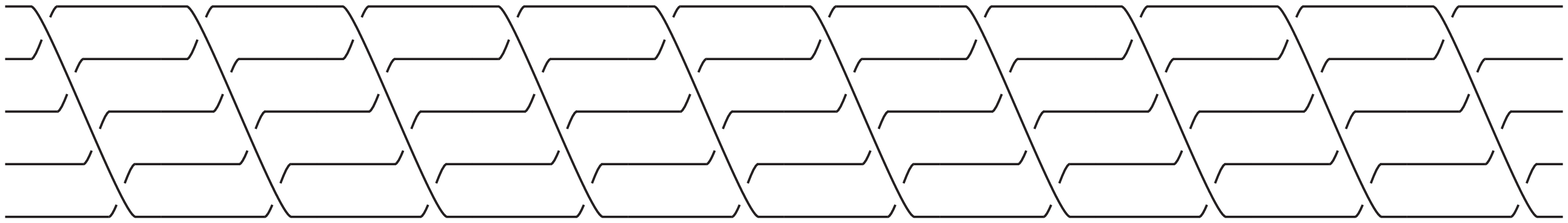}
      \put(-14,40){$x_{1}$}
      \put(-14,30){$x_{2}$}
      \put(-14,20){$x_{3}$}
      \put(-14,9){$x_{4}$}
      \put(-14,-2){$x_{5}$}
      \put(319,40){$Q_{1}$}
      \put(319,29){$Q_{2}$}
      \put(319,18){$Q_{3}$}
      \put(319,6.75){$Q_{4}$}
      \put(319,-4){$Q_{5}$}
    \end{overpic}
  \end{center}
  \caption{$5$-braid $\gamma=(\sigma_{1}\sigma_{2}\sigma_{3}\sigma_{4})^{10}$}
  \label{5braid}
\end{figure}


Let $\gamma'$ be the $6$-braid described by a diagram in Figure~\ref{6braid}. 
We put labels $x_{i}$ on initial arcs, $y_{i}$ on terminal arcs,  
and $Q_{i}$ on arcs of the diagram as illustrated in Figure~\ref{6braid} $(i=1,2,3,4,5,6)$.
By using relations of the associated core group, 
the labels $Q_{i}$ are expressed as follows: 
\[
Q_{i}=
\left\{\begin{array}{ll}
x_1x_2^{-1}x_5x_6^{-1}x_2x_1^{-1}x_ix_1^{-1}x_2x_6^{-1}x_5x_2^{-1}x_1 &\\
\hspace*{.5em}=y_1y_2^{-1}y_3y_4^{-1}y_5y_6^{-1}y_4y_3^{-1}y_2y_1^{-1}y_i
y_1^{-1}y_2y_3^{-1}y_4y_6^{-1}y_5y_4^{-1}y_3y_2^{-1}y_1&\hspace*{-.7em}(i=1,2),\\
x_6x_5^{-1}x_ix_5^{-1}x_6&\\
\hspace*{.5em}=Q_6Q_5^{-1}y_iQ_5^{-1}Q_6
=x_1x_2^{-1}x_6x_5^{-1}x_2x_1^{-1}y_ix_1^{-1}x_2x_5^{-1}x_6x_2^{-1}x_1&\hspace*{-.7em}(i=3,4),\\
x_1x_2^{-1}x_ix_2^{-1}x_1=y_4y_3^{-1}y_1y_2^{-1}y_3y_4^{-1}y_iy_4^{-1}y_3y_2^{-1}y_1y_3^{-1}y_4&\hspace*{-.7em}(i=5,6).
\end{array}\right.
\] 
Since the closure of $\gamma'$ is the $2$-parallel of the Borromean rings $L_{2BR}$,  
 $\Pi_{L_{2BR}}^{(2)}$ has the presentation
$\langle x_{1},x_{2},x_{3},x_{4},x_{5},x_{6}
\mid r_{1},r_{2},r_{3},r_{4},r_{5},r_{6}\rangle$, where 
\[
r_i=\left\{
\begin{array}{ll}
(x_1x_2^{-1}x_5x_6^{-1}x_2x_1^{-1}x_ix_1^{-1}x_2x_6^{-1}x_5x_2^{-1}x_1)^{-1}&\\
~\times x_1x_2^{-1}x_3x_4^{-1}x_5x_6^{-1}x_4x_3^{-1}x_2x_1^{-1}x_i
x_1^{-1}x_2x_3^{-1}x_4x_6^{-1}x_5x_4^{-1}x_3x_2^{-1}x_1&\hspace*{-.7em}(i=1,2),\\
(x_6x_5^{-1}x_ix_5^{-1}x_6)^{-1}
x_1x_2^{-1}x_6x_5^{-1}x_2x_1^{-1}x_i
x_1^{-1}x_2x_5^{-1}x_6x_2^{-1}x_1&\hspace*{-.7em}(i=3,4),\\
(x_1x_2^{-1}x_ix_2^{-1}x_1)^{-1}x_4x_3^{-1}x_1x_2^{-1}x_3x_4^{-1}x_ix_4^{-1}x_3x_2^{-1}x_1x_3^{-1}x_4&\hspace*{-.7em}(i=5,6).\end{array}
\right.
\] 
We note that $\widehat{B}^{2}_{L_{2BR}}(p)\cong\mathbb{Z}_{p}^{6}$ for any odd prime $p$.
On the other hand, by computing $E^{p}(r_{6})$, then 
the coefficient of $X_{2}X_{4}X_{6}$ is $1$
and that of {$X_{4}X_{6}X_{2}$} is $0$ in $E^{p}(r_{6})$. 
Theorem~\ref{th-pmove} implies that $L_{2BR}$ is not $p$-move equivalent to 
a trivial link.  
\end{proof}

\begin{figure}[t]
  \begin{center}
    \begin{overpic}[width=6cm]{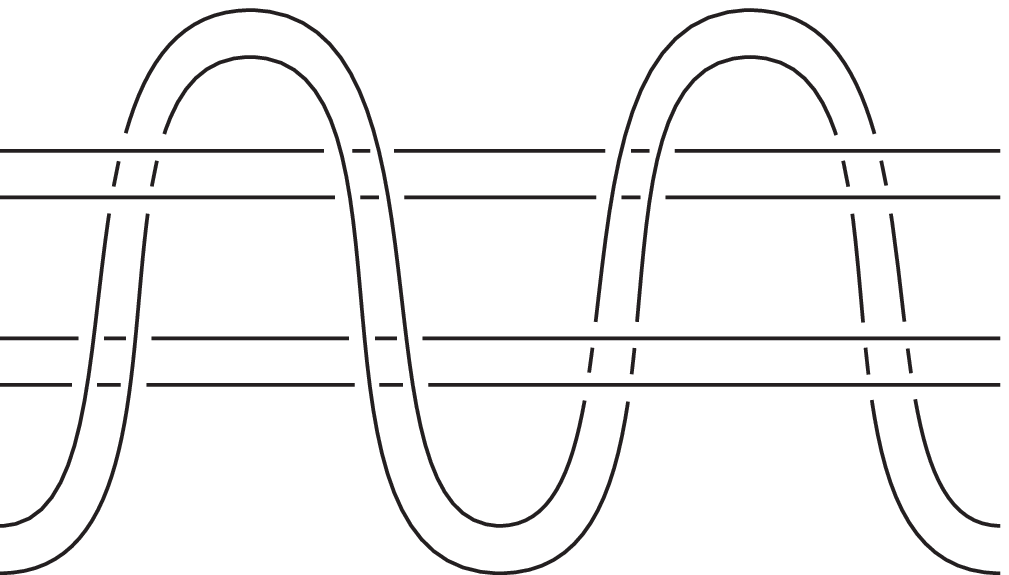}
      \put(-14,71){$x_{1}$}
      \put(-14,61){$x_{2}$}
      \put(-14,39){$x_{3}$}
      \put(-14,30){$x_{4}$}
      \put(-14,7){$x_{5}$}
      \put(-14,-2){$x_{6}$}
      \put(175,71){$y_{1}$}
      \put(175,61){$y_{2}$}
      \put(175,39){$y_{3}$}
      \put(175,30){$y_{4}$}
      \put(175,7){$y_{5}$}
      \put(175,-2){$y_{6}$}
      \put(80,76){$Q_{1}$}
      \put(80,55){$Q_{2}$}
      \put(37,44){$Q_{3}$}
      \put(37,23){$Q_{4}$}
      \put(80,14){$Q_{5}$}
      \put(80,-8.5){$Q_{6}$}
    \end{overpic}
  \end{center}
  \vspace{0.5em}
  \caption{$6$-braid $\gamma'$ whose closure is the $2$-parallel of the Borromean rings $L_{2BR}$}
  \label{6braid}
\end{figure}

\begin{remark}
\label{ex-w4braid}
For a welded link $L$, we can similarly define 
the {\em associated core group $\Pi_{L}^{(2)}$} and 
the {\em unreduced $n$th Burnside group $\widehat{B}_{L}(n)$} of $L$. 
We note that Theorems~\ref{th-pmove} and~\ref{th-4move} hold for welded links. 
Hence, we can show that 
there exists a welded link which is not $p$-move equivalent to a trivial link for any odd prime $p$ as follows.
Let $b$ be the welded $4$-braid described by a virtual diagram in Figure~\ref{w4braid}. 
We put labels $x_{i}$ and $Q_{i}$ $(i=1,2,3,4)$ on initial and terminal arcs of the diagram, respectively. 
By using relations of the associated core group, 
the labels $Q_{i}$ are expressed as follows: 
\[
Q_{i}=\left\{
\begin{array}{ll}
x_{4}x_{1}^{-1}x_{2}x_{4}^{-1}x_{1}x_{2}^{-1}x_{3}
x_{2}^{-1}x_{1}x_{4}^{-1}x_{2}x_{1}^{-1}x_{4} &\text{if $i=3$}, \\
x_{i} &\text{otherwise}.
\end{array}\right.
\]
Let $\overline{b}$ be the closure of $b$, 
then $\Pi_{\overline{b}}^{(2)}\cong\langle x_{1},x_{2},x_{3},x_{4}
\mid Q_{3}x_{3}^{-1}\rangle$. 
We note that $\widehat{B}^{2}_{\overline{b}}(p)\cong\mathbb{Z}_{p}^{4}$ for any odd prime $p$.  
On the other hand, by computing $E^{p}(Q_{3}x_{3}^{-1})$, 
we have that the coefficient of $X_{4}X_{2}X_{3}$ is $1$ and 
that of $X_{4}X_{3}X_{2}$ is $0$ in $E^{p}(Q_{3}x_{3}^{-1})$. 
Therefore, we have that 
$\overline{b}$ is not $p$-move equivalent to
a trivial link by Theorem~\ref{th-pmove}.
\end{remark}

\begin{remark}
All of the three links $\overline{\gamma}, L_{2BR}$ and $\overline{b}$ above are not $4$-move equivalent to trivial links 
by Theorem~\ref{th-4move}. 
Because terms of degree $3$ survive in $E^{2}(r)$ for some relation $r$ of $\Pi_{L}^{(2)}$ 
$(L=\overline{\gamma},L_{2BR},\overline{b})$. 
\end{remark}

\begin{figure}[t]
  \begin{center}
    \begin{overpic}[width=6cm]{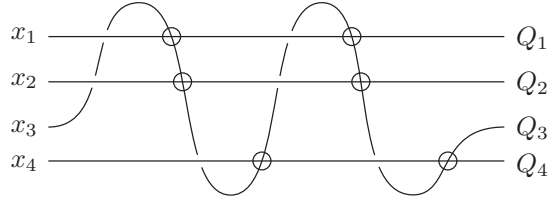}
      \put(-14,59){$x_{1}$}
      \put(-14,42){$x_{2}$}
      \put(-14,24){$x_{3}$}
      \put(-14,11){$x_{4}$}
      \put(175,57){$Q_{1}$}
      \put(175,40){$Q_{2}$}
      \put(175,23){$Q_{3}$}
      \put(175,9){$Q_{4}$}
    \end{overpic}
  \end{center}
  \caption{Welded $4$-braid $b$}
  \label{w4braid}
\end{figure}



\end{document}